\documentclass[11pt, reqno]{amsart}
\usepackage{latexsym}
\usepackage{amssymb}
\usepackage{mathrsfs}
\usepackage{amsmath}
\usepackage{fancybox,color}
\usepackage{enumerate}
\usepackage{hyperref}
\usepackage[latin1]{inputenc}
\usepackage{eurosym}

%
\newcommand{\kom}[1]{}
%
%
\renewcommand{\kom}[1]{{\bf [#1]}}


 \def\1{\raisebox{2pt}{\rm{$\chi$}}}

\newtheorem{theorem}{Theorem}[section]
\newtheorem{corollary}[theorem]{Corollary}
\newtheorem{lemma}[theorem]{Lemma}
\newtheorem{proposition}[theorem]{Proposition}
\newtheorem{definition}[theorem]{Definition}
\newtheorem{remark}[theorem]{Remark}

\newcommand{\R}{{\mathbb R}}

 \newcommand{\eps}{{\varepsilon}}
 \def\1{\raisebox{2pt}{\rm{$\chi$}}}
 

%
%
%
%
\def\vint_#1{\mathchoice%
          {\mathop{\kern 0.2em\vrule width 0.6em height 0.69678ex depth -0.58065ex
                  \kern -0.8em \intop}\nolimits_{\kern -0.4em#1}}%
          {\mathop{\kern 0.1em\vrule width 0.5em height 0.69678ex depth -0.60387ex
                  \kern -0.6em \intop}\nolimits_{#1}}%
          {\mathop{\kern 0.1em\vrule width 0.5em height 0.69678ex
              depth -0.60387ex
                  \kern -0.6em \intop}\nolimits_{#1}}%
          {\mathop{\kern 0.1em\vrule width 0.5em height 0.69678ex depth -0.60387ex
                  \kern -0.6em \intop}\nolimits_{#1}}}
\def\vintslides_#1{\mathchoice%
          {\mathop{\kern 0.1em\vrule width 0.5em height 0.697ex depth -0.581ex
                  \kern -0.6em \intop}\nolimits_{\kern -0.4em#1}}%
          {\mathop{\kern 0.1em\vrule width 0.3em height 0.697ex depth -0.604ex
                  \kern -0.4em \intop}\nolimits_{#1}}%
          {\mathop{\kern 0.1em\vrule width 0.3em height 0.697ex depth -0.604ex
                  \kern -0.4em \intop}\nolimits_{#1}}%
          {\mathop{\kern 0.1em\vrule width 0.3em height 0.697ex depth -0.604ex
                  \kern -0.4em \intop}\nolimits_{#1}}}

\newcommand{\aveint}[2]{\mathchoice%
          {\mathop{\kern 0.2em\vrule width 0.6em height 0.69678ex depth -0.58065ex
                  \kern -0.8em \intop}\nolimits_{\kern -0.45em#1}^{#2}}%
          {\mathop{\kern 0.1em\vrule width 0.5em height 0.69678ex depth -0.60387ex
                  \kern -0.6em \intop}\nolimits_{#1}^{#2}}%
          {\mathop{\kern 0.1em\vrule width 0.5em height 0.69678ex depth -0.60387ex
                  \kern -0.6em \intop}\nolimits_{#1}^{#2}}%
          {\mathop{\kern 0.1em\vrule width 0.5em height 0.69678ex depth -0.60387ex
                  \kern -0.6em \intop}\nolimits_{#1}^{#2}}}

\newcommand{\Om}{\Omega}

\newcommand{\dist}{\operatorname{dist}}

\begin{document}

\title[A game for the parabolic dominative $p$-Laplace equation]{A control problem related to the parabolic dominative $p$-Laplace equation}

\author[H\o eg]{Fredrik Arbo H\o eg}
\address{Department of Mathematical Sciences, Norwegian University of Science and Technology, NO-7491 Trondheim, Norway}
\email{fredrik.hoeg@ntnu.no}

\author[Ruosteenoja]{Eero Ruosteenoja}
\address{Department of Mathematical Sciences, Norwegian University of Science and Technology, NO-7491 Trondheim, Norway}
\email{eero.k.ruosteenoja@ntnu.no}

\date{\today}
\keywords{Parabolic equations, Dominative $p$-Laplacian, optimal control, viscosity solutions.} \subjclass[2010]{35K20, 91A22.}

\begin{abstract}
We show that value functions of a certain time-dependent control problem in $\Omega\times (0,T)$, with a continuous payoff $F$ on the parabolic boundary, converge uniformly to the viscosity solution of the parabolic dominative $p$-Laplace equation $$2(n+p)u_t=\Delta u+(p-2)\lambda_n(D^2 u),$$ with the boundary data $F$. Here $2< p< \infty$, and $\lambda_n(D^2 u)$ is the largest eigenvalue of the Hessian $D^2 u$.
\end{abstract}

\maketitle


\section{Introduction}\label{intro}
In this paper we give a control problem interpretation for the parabolic dominative $p$-Laplace equation
\begin{equation}\label{parab}
2(n+p)u_t=\mathcal{D}_pu\quad \quad \text{in}\quad \Omega_T.
\end{equation}
Here $\Omega_T:=\Omega\times (0,T),$ where $\Omega\subset \R^n$ is a bounded domain satisfying a uniform exterior sphere condition, and 
$$\mathcal{D}_pu:=(\lambda_1+...+\lambda_{n-1})+(p-1)\lambda_n=\Delta u+(p-2)\lambda_n,$$ where $2< p<\infty$, and $\lambda_1\leq\lambda_2\leq...\leq \lambda_n$ are the eigenvalues of the Hessian $D^2 u$. The operator $\mathcal{D}_p$ is called \emph{the dominative $p$-Laplacian}, introduced by Brustad \cite{Bru17,Bru18} and later studied by Brustad, Lindqvist and Manfredi \cite{BLM18} and H\o eg \cite{Hoe19} in the elliptic case. The dominative $p$-Laplacian explains the superposition principle of the $p$-Laplace equation, see \cite{CZ03,LM08} for more about this property. The operator $\mathcal{D}_p$ is sublinear, so it is convex, and equation \eqref{parab} is uniformly parabolic. By Theorem 3.2 in \cite{Wan92}, viscosity solutions of \eqref{parab} are in $C^{2+\alpha,\frac{2+\alpha}{2}}(\Omega_T)$ for some $\alpha>0$.

Let $u$ be a viscosity solution of \eqref{parab} with a given continuous boundary data $F$ on $\partial_p \Omega_T:=(\Omega\times \{0\})\cup (\partial \Omega\times [0,T]).$ By \cite{CIL92}, the solution is unique. In Section \ref{control} we see that for $\eps>0$ and the boundary data $F$, there is a unique Borel-measurable function $u_\eps$ satisfying a \emph{dynamic programming principle} (hereafter DPP) 
\begin{align}\label{dpp}
&u_\eps (x,t) = \frac{n+2}{p+n}\vint_{B_\eps(x)}u_\eps(y,t-\eps^2)\,dy\nonumber\\
&\quad +\frac{p-2}{p+n}\sup_{|\sigma|=1}\left[\frac{u_\eps(x+\eps\sigma,t-\eps^2)+u_\eps(x-\eps\sigma,t-\eps^2)}{2}\right] \quad \text{in} \, \, \Omega_T. 
\end{align}
Here $B_\eps(x)\subset \R^n$ is a ball centered at $x$ with the radius $\eps$, in the first term we have an average integral, and in the second term the supremum is taken over all unit vectors in $\R^n$. In Theorem \ref{convergence} we show that $u_\eps\rightarrow u$ uniformly when $\eps\rightarrow 0$. The idea of the proof is to first show that the family $\{u_\eps\}_{\eps>0}$ is uniformly bounded and asymptotically equicontinuous, and use a variant of the Arzel\'a-Ascoli theorem to see that solutions of the DPP converge uniformly to some continuous function. To show that the uniform limit is the viscosity solution of \eqref{parab}, we make use of an asymptotic mean value formula 
\begin{align}\label{asy}
&\frac{n+2}{p+n}\vint_{B_\eps(x)}v(y,t-\eps^2)\,dy\nonumber\\
&\quad +\frac{p-2}{p+n}\sup_{|\sigma|=1}\left[\frac{v(x+\eps\sigma,t-\eps^2)+v(x-\eps\sigma,t-\eps^2)}{2}\right]\nonumber\\
& =v(x,t)+\frac{\eps^2}{2(n+p)}(\mathcal{D}_p v(x,t)-2(n+p)v_t(x,t))+o(\eps^2),
\end{align}
which is valid for \emph{all} functions $v\in C^{2,1}(\Omega_T)$, see Theorem \ref{asymptotic}.

It turns out that the solution $u_\eps$ of DPP \eqref{dpp} is the value of the following time-dependent control problem. Let us denote  $\alpha= \frac{p-2}{p+n}, \beta= \frac{n+2}{p+n}$, and place a token at $(x_0,t_0) \in \Omega_T$. The controller tosses a biased coin with probabilities $\alpha$ and $\beta$. If she gets tails (with probability $\beta$), the game state moves according to the uniform probability density to a point $x_1 \in B_\eps (x_0)$. If the coin toss is heads (with probability $\alpha$), the controller chooses a unitary vector $\sigma \in \mathbb{R}^n$. The position of the token is then moved to $x_1= x_0 +\eps \sigma$ or $x_1=x_0 - \eps \sigma$ with equal probabilities. After this step, the position of the token is now at $(x_1,t_1)$, where $t_1= t_0 - \eps^2$. The game continues from $(x_1,t_1)$ according to the same rules yielding a sequence of game states
\begin{align*}
(x_0,t_0), (x_1,t_1), (x_2,t_2),...
\end{align*}
The game is stopped when the token is moved outside of $\Omega_T$ for the first time and we denote this point by $(x_\tau, t_\tau)$. The controller is then paid the amount $F(x_\tau, t_\tau)$. Naturally, the controller aims to maximize her payoff, and heuristically, the rules of the game can be read from the DPP \eqref{dpp}.

We remark that the scaling of the time derivative in equation \eqref{parab} is just a matter of convenience. For the equation $u_t=\mathcal{D}_p u$ we would define a game with the same rules as before, except that we would have $t_{j+1}=t_j-\frac{\eps^2}{2(n+p)}$ for every step in the game, see also Remark \ref{asympremark}.

This control problem has some similarities with two-player zero-sum \emph{tug-of-war} games, which were introduced by Peres, Schramm, Sheffield and Wilson \cite{peresssw09,peress08} and later studied from different perspectives, see e.g. \cite{AS12,manfredipr12,Lew18}. Time-dependent tug-of-war games, having connections to parabolic equations with the normalized $p$-Laplacian, were studied in \cite{MPR10,parviainenr16,Han18}, whereas two-player games for equations $u_t=\lambda_j(D^2 u)$, $j\in \{1,...,n\}$, were recently formulated in \cite{BER19}. For a deterministic game-theoretic approach to parabolic equations, we refer to \cite{KS10}.

This paper is organized as follows. In Section \ref{meanvalue} we prove the asymptotic mean value formula \eqref{asy}. In Section \ref{control} we show that the value of the control problem satisfies the DPP \eqref{dpp}. Finally, in Section \ref{conv} we show that value functions converge uniformly to the viscosity solution of \eqref{parab} when $\eps\rightarrow 0$.

\subsection*{Acknowledgements.} E.R. is supported by the Magnus Ehrnrooth Foundation. The authors would like to thank Peter Lindqvist and Tommi Brander for useful discussions.


\section{Asymptotic mean value formula}\label{meanvalue}

\begin{theorem}\label{asymptotic}
Let $v:\Omega_T\rightarrow \R$ be in $C^{2,1}(\Omega_T)$. Then it satisfies the asymptotic mean value formula \eqref{asy}.
\end{theorem}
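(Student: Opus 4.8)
The plan is to Taylor expand $v$ separately in time and space and collect the terms of order $\eps^2$. First I would handle the time shift. Since $v\in C^{2,1}(\Omega_T)$, the function $v_t$ is continuous, so for every $y$ with $|y-x|\le \eps$ one has $v(y,t-\eps^2)=v(y,t)-\eps^2 v_t(x,t)+o(\eps^2)$, where the remainder is uniform over $y\in \overline{B_\eps(x)}$ and over the two points $x\pm\eps\sigma$. Because the weights $\alpha=\frac{p-2}{p+n}$ and $\beta=\frac{n+2}{p+n}$ satisfy $\alpha+\beta=1$, and since the $-\eps^2 v_t(x,t)$ term is independent of both $y$ and $\sigma$, the two time-derivative contributions pull out of the integral and the supremum and combine into a single term $-\eps^2 v_t(x,t)+o(\eps^2)$. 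It then remains to expand the two spatial averages at the fixed time $t$.

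For the average integral I would use the second-order Taylor expansion
$v(y,t)=v(x,t)+\nabla v(x,t)\cdot(y-x)+\frac12 (y-x)^\top D^2v(x,t)(y-x)+o(|y-x|^2)$.
Integrating over $B_\eps(x)$, the linear term vanishes by symmetry, and the quadratic term is evaluated with the moment identity $\vint_{B_\eps(x)}(y_i-x_i)(y_j-x_j)\,dy=\frac{\eps^2}{n+2}\delta_{ij}$, giving $\vint_{B_\eps(x)}v(y,t)\,dy=v(x,t)+\frac{\eps^2}{2(n+2)}\Delta v(x,t)+o(\eps^2)$. Multiplying by $\beta$ produces the contribution $\frac{n+2}{p+n}v(x,t)+\frac{\eps^2}{2(p+n)}\Delta v(x,t)$.

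For the supremum term I would expand symmetrically so that the odd-order terms cancel, obtaining $\frac12\big(v(x+\eps\sigma,t)+v(x-\eps\sigma,t)\big)=v(x,t)+\frac{\eps^2}{2}\,\sigma^\top D^2v(x,t)\,\sigma+o(\eps^2)$, and then use $\sup_{|\sigma|=1}\sigma^\top D^2v(x,t)\,\sigma=\lambda_n(D^2v(x,t))$. The hard part will be justifying that the supremum commutes with the error term, i.e. that the $o(\eps^2)$ remainder is uniform in $\sigma$; this is exactly where the continuity of $D^2v$ is essential. Writing the Taylor remainder in mean-value form, it equals $\frac{\eps^2}{2}\sigma^\top\big(D^2v(\xi)-D^2v(x)\big)\sigma$ for some $\xi$ on the segment joining $x$ to $x\pm\eps\sigma$; as $\eps\to0$ we have $\xi\to x$ uniformly over the compact unit sphere, so by uniform continuity of $D^2v$ near $x$ the remainder is $\eps^2 o(1)$ uniformly in $\sigma$. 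The elementary bound $|\sup_\sigma f-\sup_\sigma g|\le \sup_\sigma|f-g|$ then lets me pull the supremum through, yielding $v(x,t)+\frac{\eps^2}{2}\lambda_n(D^2v(x,t))+o(\eps^2)$. Multiplying by $\alpha$ gives $\frac{p-2}{p+n}v(x,t)+\frac{(p-2)\eps^2}{2(p+n)}\lambda_n(D^2v(x,t))$.

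Finally I would add the three pieces. The zeroth-order terms recombine to $v(x,t)$ since $\alpha+\beta=1$; the two second-order space terms give $\frac{\eps^2}{2(p+n)}\big(\Delta v+(p-2)\lambda_n\big)=\frac{\eps^2}{2(p+n)}\mathcal{D}_p v(x,t)$; and the time term contributes $-\eps^2 v_t(x,t)$. Collecting these, the left-hand side equals $v(x,t)+\frac{\eps^2}{2(n+p)}\mathcal{D}_p v(x,t)-\eps^2 v_t(x,t)+o(\eps^2)$, which is precisely $v(x,t)+\frac{\eps^2}{2(n+p)}\big(\mathcal{D}_p v(x,t)-2(n+p)v_t(x,t)\big)+o(\eps^2)$, as claimed.
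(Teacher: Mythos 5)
Your proof is correct and takes essentially the same approach as the paper: a second-order Taylor expansion combined with the moment identity $\vint_{B_\eps(x)}(y_i-x_i)(y_j-x_j)\,dy=\frac{\eps^2}{n+2}\delta_{ij}$ and the fact that $\sup_{|\sigma|=1}\langle D^2v(x,t)\sigma,\sigma\rangle=\lambda_n(D^2v(x,t))$, the only organizational difference being that you peel off the time increment first while the paper expands jointly in space and time. Your explicit justification that the remainder is uniform in $\sigma$ (via the mean-value form and the bound $|\sup_\sigma f-\sup_\sigma g|\le\sup_\sigma|f-g|$) is a detail the paper leaves implicit, and a welcome addition.
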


\begin{proof}
Averaging the Taylor expansion 
\begin{align*}
v(y,t-\eps^2)&=v(x,t)+\langle D v(x,t),(y-x) \rangle+\frac12 \langle D^2 v(x,t)(y-x),(y-x) \rangle\\
&\quad -\eps^2 v_t(x,t)+o(|y-x|^2+\eps^2)
\end{align*}
over the ball $B_\eps(x)$ and calculating
\[
\vint_{B_\eps(x)}\langle D v(x,t),(y-x)\rangle\,dy=0
\]
and
\[
\vint_{B_\eps(x)} \langle D^2 v(x,t)(y-x),(y-x) \rangle\,dy=\frac{\eps^2}{n+2}\Delta v(x,t),
\]
we obtain
\begin{align}\label{laplace}
&\vint_{B_\eps(x)}v(y,t-\eps^2)\,\text{d}y\nonumber\\
& \quad=v(x,t)+ \frac{\eps^2}{2(n+2)}\Delta v(x,t)-\eps^2v_t(x,t)+o(\eps^2).
\end{align}

Next we take an arbitrary unit vector $\sigma$ and write the Taylor expansions for $v(x+h,t-\eps^2)$ with $h=\eps\sigma$ and $h=-\eps\sigma$ to obtain
\begin{align*}
v(x+\eps\sigma,t-\eps^2)&=v(x,t)+\langle D v(x,t),\eps\sigma \rangle+\frac12 \langle D^2 v(x,t)\eps\sigma,\eps\sigma \rangle\\
&\quad -\eps^2 v_t(x,t)+o(\eps^2),
\end{align*}
\begin{align*}
v(x-\eps\sigma,t-\eps^2)&=v(x,t)-\langle D v(x,t),\eps\sigma \rangle+\frac12 \langle D^2 v(x,t)(-\eps\sigma),(-\eps\sigma) \rangle\\
&\quad -\eps^2 v_t(x,t)+o(\eps^2),
\end{align*}
which yield 
\begin{align*}
&\frac{v(x+\eps\sigma,t-\eps^2)+v(x-\eps\sigma,t-\eps^2)}{2}\\
&\quad =v(x,t)+\frac{\eps^2}{2}\langle D^2 v(x,t)\sigma,\sigma \rangle-\eps^2 v_t(x,t)+o(\eps^2).
\end{align*}
Taking the supremum over all $|\sigma|=1$ gives
\begin{align}\label{inf}
&\sup_{|\sigma|=1}\left[\frac{v(x+\eps\sigma,t-\eps^2)+v(x-\eps\sigma,t-\eps^2)}{2}\right]\nonumber\\
&\quad =v(x,t)+\frac{\eps^2}{2}\lambda_n-\eps^2 v_t(x,t)+o(\eps^2).
\end{align}

By multiplying equations \eqref{laplace} and \eqref{inf} by $\frac{n+2}{p+n}$ and $\frac{p-2}{p+n}$ respectively, we get
\begin{align*}
&\frac{n+2}{p+n}\vint_{B_\eps(x)}v(y,t-\eps^2)\,dy\\
&\quad +\frac{p-2}{p+n}\sup_{|\sigma|=1}\left[\frac{v(x+\eps\sigma,t-\eps^2)+v(x-\eps\sigma,t-\eps^2)}{2}\right]\\
& =v(x,t)+\frac{\eps^2}{2(n+p)}(\mathcal{D}_p v(x,t)-2(n+p)v_t(x,t))+o(\eps^2).\qedhere
\end{align*}
\end{proof}

Next we define viscosity solutions for equation \eqref{parab}.

\begin{definition}
\label{def:viscosity-solution}
An upper semicontinuous function $u : \Om_T\to \R$ is a viscosity subsolution to equation $2(n+p)u_t=\mathcal{D}_p u$ in $\Omega_T$ if for all $(x_0,t_0)\in \Omega_T$ and $\phi \in C^2(\Om_T)$ such that
\begin{enumerate}
\item[i)]  $u(x_0, t_0) = \phi(x_0, t_0)$,
\item[ii)] $\phi(x, t) > u(x, t)$ for $(x, t) \in \Om_T,\ (x,t)\neq (x_0,t_0)$,
\end{enumerate}
it holds $2(n+p)\phi_t(x_0,t_0)\leq \mathcal{D}_p \phi(x_0,t_0)$. 

A lower semicontinuous function $u : \Om_T\to \R$ is a viscosity supersolution to equation $2(n+p)u_t=\mathcal{D}_p u$ in $\Omega_T$ if for all $(x_0,t_0)\in \Omega_T$ and $\phi \in C^2(\Om_T)$ such that
\begin{enumerate}
\item[i)]  $u(x_0, t_0) = \phi(x_0, t_0)$,
\item[ii)] $\phi(x, t) < u(x, t)$ for $(x, t) \in \Om_T,\ (x,t)\neq (x_0,t_0)$,
\end{enumerate}
it holds $2(n+p)\phi_t(x_0,t_0)\geq \mathcal{D}_p \phi(x_0,t_0)$. 

A continuous function $u : \Om_T\to \R$ is a viscosity solution to equation $2(n+p)u_t=\mathcal{D}_p u$ in $\Omega_T$ if it is both a subsolution and a supersolution.
\end{definition}

Because viscosity solutions of \eqref{parab} are in $C^{2+\alpha,\frac{2+\alpha}{2}}(\Omega_T)$ for some $\alpha>0$ (see Section \ref{intro}), we get the following corollary.

\begin{corollary}
Let $u$ be a viscosity solutions of \eqref{parab}. Then it satisfies an asymptotic mean value formula
\begin{align}\label{asymp}
u(x,t)&=\frac{n+2}{p+n}\vint_{B_\eps(x)}u(y,t-\eps^2)\,dy\nonumber\\
& \quad +\frac{p-2}{p+n}\sup_{|\sigma|=1}\left[\frac{u(x+\eps\sigma,t-\eps^2)+u(x-\eps\sigma,t-\eps^2)}{2}\right]+o(\eps^2).
\end{align}
\end{corollary}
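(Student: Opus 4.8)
The plan is to reduce the statement to Theorem \ref{asymptotic} by upgrading the regularity of $u$. First I would invoke the regularity fact already recorded in Section \ref{intro}: since $\mathcal{D}_p$ is sublinear and equation \eqref{parab} is uniformly parabolic, every viscosity solution $u$ belongs to $C^{2+\alpha,\frac{2+\alpha}{2}}(\Omega_T)$ for some $\alpha>0$ by Wang's theorem. In particular $u\in C^{2,1}(\Omega_T)$, which is exactly the hypothesis needed to apply Theorem \ref{asymptotic} to $u$ itself. This is the crucial observation: a priori the mean value formula of Theorem \ref{asymptotic} is only available for $C^{2,1}$ test functions, and what makes it usable for a solution is that the solution is automatically smooth enough.

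Next I would simply substitute $v=u$ into the asymptotic mean value formula \eqref{asy}. This gives
\begin{align*}
&\frac{n+2}{p+n}\vint_{B_\eps(x)}u(y,t-\eps^2)\,dy\\
&\quad +\frac{p-2}{p+n}\sup_{|\sigma|=1}\left[\frac{u(x+\eps\sigma,t-\eps^2)+u(x-\eps\sigma,t-\eps^2)}{2}\right]\\
&=u(x,t)+\frac{\eps^2}{2(n+p)}\bigl(\mathcal{D}_p u(x,t)-2(n+p)u_t(x,t)\bigr)+o(\eps^2).
\end{align*}
The remaining step is to kill the leading correction term. Because $u\in C^{2,1}(\Omega_T)$ and $u$ is a viscosity solution of \eqref{parab}, the standard equivalence between viscosity and classical solutions for smooth functions (one may use $\phi=u$ as a test function in both the sub- and supersolution definitions) shows that $u$ solves \eqref{parab} pointwise, so $\mathcal{D}_p u(x,t)-2(n+p)u_t(x,t)=0$ for every $(x,t)\in\Omega_T$. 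Inserting this into the display above and rearranging yields precisely \eqref{asymp}.

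I do not expect any genuine obstacle here, since the two hard inputs are already established upstream: the regularity upgrade from Wang's theorem and the computation of the expansion in Theorem \ref{asymptotic}. The only point requiring a word of care is justifying that the pointwise equation holds, i.e. that a $C^{2,1}$ viscosity solution is a classical solution; this is routine but should be stated so that the vanishing of the first-order-in-$\eps^2$ term is properly licensed, leaving $o(\eps^2)$ as the sole error.
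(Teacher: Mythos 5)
Your proposal is correct and follows exactly the paper's route: the paper derives the corollary by citing Wang's regularity theorem (so $u\in C^{2+\alpha,\frac{2+\alpha}{2}}(\Omega_T)\subset C^{2,1}(\Omega_T)$), applying Theorem \ref{asymptotic} with $v=u$, and using that $u$ solves \eqref{parab} pointwise so the correction term vanishes. Your extra remark that a $C^{2,1}$ viscosity solution is a classical solution is a reasonable precision that the paper leaves implicit, but it is the same argument.
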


\begin{remark}\label{asympremark}
Our scaling of the time variable is for convenience. The same idea would give for viscosity solutions of  
\begin{align*}
u_t = \mathcal{D}_p u
\end{align*}
an asymptotic mean value formula 
\begin{align*}
&u(x,t)=\frac{n+2}{p+n}\vint_{B_\eps(x)}u(y,t-\frac{\eps^2}{2(n+p)})\,dy\nonumber\\
& +\frac{p-2}{p+n}\sup_{|\sigma|=1}\left[\frac{u(x+\eps\sigma,t-\frac{\eps^2}{2(n+p)})+u(x-\eps\sigma,t-\frac{\eps^2}{2(n+p)})}{2}\right]+o(\eps^2).
\end{align*}
\end{remark}

\section{Control problem formulation}\label{control}

In this section we show that the value of the control problem described in Section \ref{intro} satisfies the DPP \eqref{dpp}. Since the game token may be placed outside of $\overline{\Omega}_T$, we denote the compact parabolic boundary strip of width $\eps>0$ by 
\begin{align*}
\Gamma_\eps=\left(S_\eps \times \big[-\eps^2,0\big]\right) \cup \left(\Omega \times \big [-\eps^2, 0\big] \right),
\end{align*}
where
\begin{align*}
S_\eps = \left \{  x\in \mathbb{R}^n \setminus \Omega\, :\, \text{dist}(x, \partial \Omega) \leq \eps   \right \}.
\end{align*}
Throughout this section, we are given a continuous function $$F: \Gamma_\eps \rightarrow \mathbb{R}.$$ Our control problem with the payoff $F$ was formulated in Section \ref{intro}. The process is stopped when the token hits the boundary strip $\Gamma_\eps$ for the first time at, say $(x_\tau,t_\tau)\in \Gamma_\eps$, and then the controller earns the amount $F(x_\tau,t_\tau)$.

Next we define the stochastic vocabulary for the control problem. A \textit{strategy} is a rule which gives, at each step of the game, a direction $\sigma$,
\begin{align*}
S(t_0, x_0,x_1,...,x_k)= \sigma\in \mathbb{R}^n, \quad |\sigma|=1. 
\end{align*}
Here, $S$ is a Borel measurable function. Let $A \subset \Omega_T \cup \Gamma_\eps$ be a measurable set.
Given a sequence of token positions $(x_0,t_0), (x_1, t_1),..., (x_k,t_k)$ and a strategy $S$, the next position of the token is distributed according to the transition probability
\begin{align*}
\pi_S \left( (x_0,t_0), (x_1, t_1),..., (x_k,t_k), A\right) &= \beta \frac{\left | A \cap \left(  B_\eps (x_k) \times \{ t_k- \eps^2\}\right) \right| }{\left | B_\eps (x_k) \times \{t_k -\eps^2  \}  \right| } \\
&+ \frac{\alpha}{2}\delta_{(x_k+ \eps \sigma, t_k-\eps^2 )}(A) + \frac{\alpha}{2}\delta_{(x_k-\eps \sigma, t_k-\eps^2 )}(A)
\end{align*}
where in the first term we use the $n$-dimensional Lebesgue measure, and in the last terms $\delta_{(y,s)}(B)=1$ if $(y,s)\in B$ and 0 otherwise.

For a starting point $(x_0,t_0)$, a strategy $S$ and the corresponding transition probabilities, we can use Kolmogorov's extension theorem to determine a unique probability measure $\mathbb{P}_S^{(x_0,t_0)}$ in the space of all game sequences denoted $H^\infty$. The expected payoff is then 
\begin{align*}
\mathbb{E}_S^{(x_0,t_0)} [F(x_\tau, t_\tau)] = \int_{H^\infty} F(x_\tau, t_\tau)\,  d\mathbb{P}_S^{(x_0,t_0)},
\end{align*} 
and the value of the game for the controller is
\begin{align*}
u^\eps (x_0,t_0)= \sup_S \mathbb{E}_S^{(x_0,t_0)} [F(x_\tau, t_\tau)].
\end{align*}
Since $F$ is bounded and 
\begin{align*}
\tau \leq \frac{T}{\eps^2}+1,
\end{align*}
the value of the game is well defined. From the definition we immediately get the following comparison principle.
\begin{proposition}\label{compprinciple}
Fix $\eps>0$. Let $u^\eps$ be the value of the game with the payoff $F_1$, and $v^\eps$ the value of the game with the payoff $F_2$. Assume that $F_1\geq F_2$ on $\Gamma_\eps$. Then $u^\eps\geq v^\eps$ in $\Omega_T$.
\end{proposition}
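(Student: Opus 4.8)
The plan is to exploit the fact that the two games share the \emph{same} underlying dynamics and differ only in the terminal payoff. Concretely, the transition probability $\pi_S$, the stopping time $\tau$, and hence the induced probability measure $\mathbb{P}_S^{(x_0,t_0)}$ on the path space $H^\infty$ depend on the strategy $S$ and the starting point, but \emph{not} on the choice of payoff function. So for a single fixed strategy $S$ and a fixed initial point $(x_0,t_0)$, both expectations are integrals against one and the same measure.

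First I would recall that the process is always stopped in the boundary strip, that is, $(x_\tau,t_\tau)\in\Gamma_\eps$ for every game sequence (this is how $\tau$ is defined, and it is finite since $\tau\le T/\eps^2+1$). Therefore the hypothesis $F_1\ge F_2$ on $\Gamma_\eps$ gives the pointwise inequality
\begin{align*}
F_1(x_\tau,t_\tau)\ge F_2(x_\tau,t_\tau)
\end{align*}
along \emph{every} realization of the game. Integrating this inequality against $\mathbb{P}_S^{(x_0,t_0)}$ and using monotonicity of the integral yields, for each strategy $S$,
\begin{align*}
\mathbb{E}_S^{(x_0,t_0)}[F_1(x_\tau,t_\tau)]\ge \mathbb{E}_S^{(x_0,t_0)}[F_2(x_\tau,t_\tau)].
\end{align*}

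Finally I would pass to the supremum over strategies. Since $\sup_S \mathbb{E}_S^{(x_0,t_0)}[F_1]\ge \mathbb{E}_S^{(x_0,t_0)}[F_1]\ge \mathbb{E}_S^{(x_0,t_0)}[F_2]$ holds for every admissible $S$, the left-hand side dominates the supremum over $S$ of the right-hand side, giving
\begin{align*}
u^\eps(x_0,t_0)=\sup_S \mathbb{E}_S^{(x_0,t_0)}[F_1]\ge \sup_S \mathbb{E}_S^{(x_0,t_0)}[F_2]=v^\eps(x_0,t_0)
\end{align*}
at the arbitrary point $(x_0,t_0)\in\Omega_T$. I do not expect any genuine obstacle here; the only point requiring a moment's care is the observation that the measure $\mathbb{P}_S^{(x_0,t_0)}$ is payoff-independent, so the comparison reduces to monotonicity of the integral followed by a supremum, exactly as the phrase \emph{``from the definition we immediately get''} suggests.
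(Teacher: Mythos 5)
Your proof is correct and matches the paper exactly: the paper gives no written proof, asserting the proposition follows ``from the definition,'' and your argument --- that $\mathbb{P}_S^{(x_0,t_0)}$ is payoff-independent, so $F_1\ge F_2$ on $\Gamma_\eps$ gives the pointwise inequality $F_1(x_\tau,t_\tau)\ge F_2(x_\tau,t_\tau)$ almost surely, hence $\mathbb{E}_S^{(x_0,t_0)}[F_1(x_\tau,t_\tau)]\ge \mathbb{E}_S^{(x_0,t_0)}[F_2(x_\tau,t_\tau)]$ for every strategy $S$, and the inequality survives the supremum --- is precisely the intended one-line argument, carefully spelled out. No gaps.
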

Our aim is to show that the value function $u^\eps$ satisfies the DPP with the boundary data $F$.

\begin{definition}
A Borel measurable function $u_\eps$ satisfies the dynamic programming principle, abbreviated DPP, in $\Omega_T$, with the boundary data $F$, if
\begin{align*}
u_\eps (x,t) &= \frac{n+2}{p+n}\vint_{B_\eps(x)}u_\eps(y,t-\eps^2 )\,dy\\
&\quad +\frac{p-2}{p+n}\sup_{|\sigma|=1}\left[\frac{u_\eps(x+\eps\sigma,t-\eps^2 )+u_\eps(x-\eps\sigma,t-\eps^2 )}{2}\right] \quad \text{in} \, \, \Omega_T \\
u_\eps (x,t) &= F(x,t) \quad \text{on} \, \,  \Gamma_\eps.
\end{align*}
\label{DPP}
\end{definition}
\begin{lemma}
There is a unique Borel measurable function $u_\eps$ satisfying the DPP. Moreover, $u_\eps$ is lower semi-continuous. 
\end{lemma}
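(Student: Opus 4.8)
The plan is to exploit the \emph{causal} structure of the DPP in time: the value of $u_\eps$ at $(x,t)$ is determined solely by its values on the earlier slice at time $t-\eps^2$. Writing
\[
(Tg)(x)=\frac{n+2}{p+n}\vint_{B_\eps(x)}g(y)\,dy+\frac{p-2}{p+n}\sup_{|\sigma|=1}\frac{g(x+\eps\sigma)+g(x-\eps\sigma)}{2}
\]
for bounded $g$ on a single spatial slice, the interior DPP reads $u_\eps(\cdot,t)=T\big(u_\eps(\cdot,t-\eps^2)\big)$, while $u_\eps=F$ for times in $[-\eps^2,0]$. I would construct $u_\eps$ and establish its uniqueness simultaneously by induction over the time layers $((k-1)\eps^2,k\eps^2]$, $k=1,2,\dots$. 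Suppose $u_\eps$ is already uniquely determined on all slices at times $\le(k-1)\eps^2$; then for $t\in((k-1)\eps^2,k\eps^2]$ the point $t-\eps^2$ lies at a time $\le(k-1)\eps^2$, so the right-hand side of the DPP involves only known values and $u_\eps(x,t)=T\big(u_\eps(\cdot,t-\eps^2)\big)(x)$ is forced. Because the backward chain $t,t-\eps^2,t-2\eps^2,\dots$ reaches $[-\eps^2,0]$ after at most $\lceil T/\eps^2\rceil$ steps (compare $\tau\le T/\eps^2+1$), finitely many layers exhaust $\Omega_T$, and both existence and uniqueness follow.

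The remaining task is to show that the DPP propagates Borel measurability and lower semicontinuity from the continuous datum $F$ through every layer. Since the weights $\tfrac{n+2}{p+n}$ and $\tfrac{p-2}{p+n}$ are positive and sum to $1$, the operator $T$ preserves uniform bounds, so all iterates remain bounded below and Fatou's lemma is available. The averaging term $(x,t)\mapsto\vint_{B_\eps(x)}u_\eps(y,t-\eps^2)\,dy$ is continuous in $x$ for bounded measurable data (continuity of translation in $L^1$) and is jointly lower semicontinuous in $(x,t)$ whenever $u_\eps(\cdot,t-\eps^2)$ is, by Fatou's lemma applied to $\mathbf 1_{B_\eps(x)}(y)\,u_\eps(y,t-\eps^2)$. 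The supremum term is the delicate one: for each fixed $\sigma$ the map $(x,t)\mapsto u_\eps(x\pm\eps\sigma,\,t-\eps^2)$ is lower semicontinuous (precomposition of a lower semicontinuous function with a continuous shift), and a supremum of lower semicontinuous functions is again lower semicontinuous. This single observation is what makes the argument work: it shows the supremum term is lower semicontinuous — hence Borel measurable — in spite of the uncountable index set $\{|\sigma|=1\}$, which is exactly where an unstructured supremum could destroy measurability. Consequently each newly constructed slice is a convex combination of lower semicontinuous functions, hence lower semicontinuous and Borel.

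I expect the main obstacle to be the joint lower semicontinuity of $u_\eps$ at the interfaces between consecutive time layers and along the boundary strip $\Gamma_\eps$, where the DPP-defined interior values are matched to $F$. I would check this layer by layer, verifying at each interface that the $\liminf$ of the interior values from the side $t>(k-1)\eps^2$ dominates the value already assigned on the slice $t=(k-1)\eps^2$; the stability of lower semicontinuity under $T$ then closes the induction. It is precisely this one-sided matching at the parabolic boundary that forces lower semicontinuity rather than full continuity of the value function $u_\eps$.
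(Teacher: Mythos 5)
Your proposal follows essentially the same route as the paper: existence and uniqueness by forward induction over time layers of width $\eps^2$ starting from $F$ on $\Gamma_\eps$, with lower semicontinuity propagated through each application of the DPP operator via Fatou's lemma for the averaging term and the fact that a supremum of lower semicontinuous functions over the unit sphere is again lower semicontinuous. Your closing concern about joint lower semicontinuity at the layer interfaces is not treated in the paper either --- its argument, like yours, only establishes the regularity slice by slice, which is all that is used later --- so your write-up is, if anything, slightly more explicit than the published proof.
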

\begin{proof}
 The existence and uniqueness of such a function $u_\eps$ can be seen from the following argument. Given $F$ on $\Gamma_\eps$, we can determine $u_\eps(x,t)$ for all $x \in \Omega$ and $0<t<\eps^2$. We want to continue this process, but we need to make sure that the function is lower semi-continuous or at least Borel measurable. The following argument is from personal communication with Brustad, Lindqvist, and Manfredi. In general, when $u$ is any bounded and lower semi-continuous function, then by using Fatou's lemma, 
\begin{align*}
& \frac{n+2}{p+n}\vint_{B_\eps(x)}u(y,t-\eps^2)\,dy\nonumber\\
	& \quad +\frac{p-2}{p+n}\sup_{|\sigma|=1}\left[\frac{u(x+\eps\sigma,t-\eps^2)+u(x-\eps\sigma,t-\eps^2)}{2}\right]
	\end{align*} 
	is again bounded and lower semi-continuous. This gives a lower semi-continuous function $u_\eps$ defined for all $x \in \Omega$ and $0<t<\eps^2$. Continuing this process until $t=T$ gives the desired function.

\end{proof}

\begin{lemma}
Let $u_\eps$ be the unique function satisfying the DPP of definition \ref{DPP} with the boundary data $F$ on $\Gamma_\eps$, and let $u^\eps$ be the value of the game with the payoff $F$. Then  
\begin{align*}
u_\eps = u^\eps.
\end{align*} 
\end{lemma}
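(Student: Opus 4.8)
The plan is to prove the two inequalities $u^\eps \le u_\eps$ and $u^\eps \ge u_\eps$ in $\Omega_T$ separately, in both cases by viewing $u_\eps$ evaluated along the game sequence as a (super- or sub-)martingale and applying optional stopping, which is legitimate because $\tau \le T/\eps^2 + 1$ is bounded. Write $\mathcal{F}_k$ for the $\sigma$-algebra generated by the first $k$ token positions.

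For $u^\eps \le u_\eps$, I would fix an arbitrary strategy $S$ and a starting point $(x_0,t_0)\in\Omega_T$. If $(x_k,t_k)\in\Omega_T$ and $\sigma_k = S(t_0,x_0,\dots,x_k)$ is the direction prescribed by $S$, then the transition probability $\pi_S$ gives
$$\mathbb{E}_S^{(x_0,t_0)}[u_\eps(x_{k+1},t_{k+1})\mid\mathcal{F}_k] = \beta\vint_{B_\eps(x_k)}u_\eps(y,t_k-\eps^2)\,dy + \frac{\alpha}{2}\big(u_\eps(x_k+\eps\sigma_k,t_k-\eps^2)+u_\eps(x_k-\eps\sigma_k,t_k-\eps^2)\big).$$
Since the second term is bounded above by $\alpha\sup_{|\sigma|=1}[\cdots]$, the DPP yields $\mathbb{E}_S^{(x_0,t_0)}[u_\eps(x_{k+1},t_{k+1})\mid\mathcal{F}_k]\le u_\eps(x_k,t_k)$, so the stopped process $k\mapsto u_\eps(x_{k\wedge\tau},t_{k\wedge\tau})$ is a supermartingale. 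Optional stopping together with the boundary condition $u_\eps=F$ on $\Gamma_\eps$ gives $\mathbb{E}_S^{(x_0,t_0)}[F(x_\tau,t_\tau)] = \mathbb{E}_S^{(x_0,t_0)}[u_\eps(x_\tau,t_\tau)]\le u_\eps(x_0,t_0)$, and taking the supremum over all $S$ yields $u^\eps\le u_\eps$.

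For the reverse inequality I would construct a nearly optimal strategy. Fix $\eta>0$. At the $k$-th step I choose the direction $\sigma_k$, as a function of the history, so that the resulting one-step conditional expectation satisfies $\mathbb{E}_{S_0}^{(x_0,t_0)}[u_\eps(x_{k+1},t_{k+1})\mid\mathcal{F}_k]\ge u_\eps(x_k,t_k)-\eta 2^{-k}$; this is possible because the symmetric average can be taken within $\eta 2^{-k}$ of the supremum in the DPP. Consequently the process $N_k := u_\eps(x_k,t_k)+\eta\sum_{j=0}^{k-1}2^{-j}$ is a submartingale under $\mathbb{P}_{S_0}^{(x_0,t_0)}$, and optional stopping, using $\sum_{j=0}^{\tau-1}2^{-j}\le 2$, gives $\mathbb{E}_{S_0}^{(x_0,t_0)}[F(x_\tau,t_\tau)]\ge u_\eps(x_0,t_0)-2\eta$. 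Hence $u^\eps\ge u_\eps-2\eta$, and letting $\eta\to 0$ finishes the proof.

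The main obstacle is ensuring that the near-optimal direction $\sigma_k$ can be selected as a Borel measurable function of $(t_0,x_0,\dots,x_k)$, so that $S_0$ is an admissible strategy in the sense of the definition above; this is settled by a measurable selection argument applied to the supremum in the DPP. The remaining points—computing the one-step conditional expectation from $\pi_S$ and verifying the (sub/super)martingale property—are routine once the measurability is in place, and optional stopping applies with no integrability concerns because $\tau$ is bounded and $F$ is bounded.
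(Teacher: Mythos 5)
Your proposal is correct and follows essentially the same route as the paper: an arbitrary-strategy supermartingale plus optional stopping (justified by the bounded stopping time) for $u^\eps\le u_\eps$, and a nearly optimal strategy with geometric error tolerances $\eta 2^{-k}$ turning $u_\eps(x_k,t_k)$ plus a correction into a submartingale for the reverse inequality, with the Borel measurability of the near-optimal direction handled by a measurable selection result exactly as you anticipate (the paper cites Lemma 3.4 of \cite{LM17}). The only differences are cosmetic bookkeeping: the paper's submartingale is $u_\eps(x_k,t_k)-\eta 2^{-k}$, yielding the bound $u_\eps-\eta$ instead of your $u_\eps-2\eta$, which is immaterial since $\eta>0$ is arbitrary.
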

\begin{proof}
Let $(x_0,t_0) \in \Omega_T$. We aim to show that $u_\eps(x_0,t_0) = u^\eps (x_0,t_0)$. Assume that the game starts at $(x_0,t_0)\in \Omega_T$.  

First we assume that the controller uses an arbitrary strategy $S$. Then we have for the function $u_\eps$ satisfying the DPP,
\begin{align*}
&\mathbb{E}_S^{(x_0,t_0)} [u_\eps(x_{k+1}, t_{k+1}) | (t_0,x_0,x_1,...,x_k)] = \beta \vint_{B_{\eps}(x_k)} u_\eps(y,t_k-\eps^2 ) \, dy  \\
& \quad + \alpha \frac{u_\eps (x_k+\eps \sigma, t_k - \eps^2 ) + u_\eps(x_k-\eps \sigma, t_k -\eps^2 )}{2} \\
& \leq \beta \vint_{B_{\eps}(x_k)} u_\eps(y,t_k-\eps^2) \, dy \\
& \quad + \alpha \sup_{|\sigma|=1}\left[\frac{u_\eps(x_k+\eps\sigma,t_k-\eps^2 )+u_\eps(x_k-\eps\sigma,t_k-\eps^2 )}{2}\right] \\
&= u_\eps (x_k,t_k).
\end{align*}
This shows that $M_k:=u_\eps (x_k,t_k)$ is a supermartingale, so $$\mathbb{E}_S^{(x_0,t_0)} [F(x_\tau, t_\tau) | (t_0,x_0,x_1,...,x_{\tau-1})] \leq u_\eps(x_0,t_0)$$ by the optimal stopping theorem. Hence
\begin{align*}
u^\eps (x_0,t_0) = \sup_S \mathbb{E}_S^{(x_0,t_0)} [F(x_\tau, t_\tau)] \leq u_\eps (x_0,t_0). 
\end{align*}

To prove the reverse inequality, we choose a strategy $S_0$ giving a corresponding $\sigma(x,t)$ for the controller that \textit{almost} maximizes $u_\eps(x,t)$. To be more precise, for arbitrary $\eta >0$, the controller chooses 
\begin{align*}
&\frac{u_\eps (x_k+\eps \sigma(x_k,t_k), t_k-\eps^2 )+u_\eps (x_k-\eps \sigma(x_k,t_k), t_k-\eps^2) }{2} \\ &\geq \sup_{|\sigma|=1}\left[\frac{u_\eps(x_k+\eps\sigma,t_k-\eps^2)+u_\eps(x_k-\eps\sigma,t_k-\eps^2)}{2}\right]- \eta 2^{-(k+1)}.
\end{align*}
The function $S_0$ can be taken to be a Borel function, see Lemma 3.4 in \cite{LM17}.

We obtain
\begin{align*}
&\mathbb{E}_{S_0}^{(x_0,t_0)} [u_\eps(x_{k+1}, t_{k+1}) - \eta 2^{-(k+1)} | (t_0,x_0,x_1,...,x_k)]\\
& \geq \beta \vint_{B_{\eps}(x_k)} u_\eps(y,t_k-\eps^2 ) \, dy  \\
& \quad+ \alpha  \sup_{|\sigma|=1}\left[\frac{u_\eps(x_k+\eps\sigma,t_k-\eps^2 )+u_\eps(x_k-\eps\sigma,t_k-\eps^2 )}{2}\right]\\
& \quad - \alpha \eta 2^{-(k+1)} - \eta 2^{-(k+1)} \\
&\geq u_\eps(x_k,t_k) - \eta 2^{-k}.
\end{align*}
Hence
\begin{align*}
M_k=u_\eps(x_k,t_k) - \eta 2^{-k}
\end{align*}
is a submartingale. Using the optimal stopping theorem for this submartingale we find
\begin{align*}
u^\eps (x_0,t_0) &=  \sup_S \mathbb{E}_S^{(x_0,t_0)} [F(x_\tau, t_\tau)] \geq \mathbb{E}_{S_0}^{(x_0,t_0)} [F(x_\tau, t_\tau)] \\
&\geq \mathbb{E}_{S_0}^{(x_0,t_0)} [u_\eps(x_\tau, t_\tau)- \eta 2^{-k}] \\
& \geq \mathbb{E}_{S_0}^{(x_0,t_0)} [u_\eps(x_0, t_0)- \eta 2^{-0}] = u_\eps (x_0,t_0) - \eta. 
\end{align*}
Since $\eta >0$ was arbitrary, this proves the lemma.  
\end{proof}

\section{Convergence to the viscosity solution}\label{conv}

In this section, we are given a continuous payoff function $F:\Gamma_1\rightarrow \R$. Our goal is to show that with this payoff, value functions of our game converge uniformly to the unique viscosity solution of 
\begin{equation}\label{Dirichlet}
\begin{split}
\begin{cases}
2(n+p)u_t=\mathcal{D}_p u  \quad  &\textrm{in}\quad \Omega_T,\\
u   =  F \quad  &\textrm{on}\quad  \partial_p \Omega_T.
\end{cases}
\end{split}
\end{equation}

We will make use of the following Arzel\'a-Ascoli-type lemma, which has been previously used e.g. in \cite{MPR10,parviainenr16,BER19}. We omit the proof, which is a modification of \cite[Lemma 4.2]{manfredipr12}.

\begin{lemma}\label{Ascoli}
Let $\left\{f_\eps:\overline{\Omega}_T\rightarrow \R\right\}_{\eps\in (0,1)}$ be a uniformly bounded family of functions such that for a given $\eta>0$, there are constants $r_0$ and $\eps_0$ such that for every $\eps<\eps_0$ and any $(x,t),(y,s)\in \overline{\Omega}_T$ with 
\[
|(x,t)-(y,s)|<r_0,
\]
it holds
\[
\left|f_\eps(x,t)-f_\eps(y,s)\right|<\eta. 
\]
Then there exists a uniformly continuous function $f:\overline{\Omega}_T\rightarrow \R$ and a subsequence, still denoted by $(f_\eps)$, such that $f_\eps\rightarrow f$ uniformly in $\overline{\Omega}_T$ as $\eps\rightarrow 0$.
\end{lemma}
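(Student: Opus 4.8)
The plan is to run a Cantor diagonal argument adapted to the \emph{asymptotic} (rather than uniform) equicontinuity supplied by the hypothesis. Since $\overline{\Omega}_T=\overline{\Omega}\times[0,T]$ is compact, it is separable; fix a countable dense set $D=\{z_1,z_2,\dots\}\subset\overline{\Omega}_T$. Starting from any null sequence $\eps_k\to 0$, the uniform boundedness of $\{f_{\eps_k}\}$ permits a diagonal extraction of a subsequence, still written $(\eps_k)$, along which $f_{\eps_k}(z_i)$ converges for every $i$; denote the limit by $f(z_i)$. At this stage only boundedness has been used, so the extraction is legitimate for every fixed point of $D$.

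Next I would promote $f$ to a uniformly continuous function on all of $\overline{\Omega}_T$. Given $\eta>0$, the hypothesis yields $r_0$ and $\eps_0$; for any $z_i,z_j\in D$ with $|z_i-z_j|<r_0$ and every $\eps_k<\eps_0$ we have $|f_{\eps_k}(z_i)-f_{\eps_k}(z_j)|<\eta$, and passing to the limit $k\to\infty$ gives $|f(z_i)-f(z_j)|\le\eta$. Thus $f$ is uniformly continuous on the dense set $D$, with modulus governed by the assignment $\eta\mapsto r_0(\eta)$, and it therefore extends uniquely to a uniformly continuous function $f:\overline{\Omega}_T\to\R$.

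Finally I would upgrade pointwise convergence on $D$ to uniform convergence by the usual $\eta/3$ splitting. Given $\eta>0$, choose $r_0,\eps_0$ associated with $\eta/3$, shrinking $r_0$ if necessary so that also $|f(z)-f(w)|<\eta/3$ whenever $|z-w|<r_0$. By compactness select finitely many net points $z_{i_1},\dots,z_{i_m}\in D$ such that every $z\in\overline{\Omega}_T$ lies within $r_0$ of some $z_{i_\ell}$, and pick $K$ with $|f_{\eps_k}(z_{i_\ell})-f(z_{i_\ell})|<\eta/3$ for all $\ell$ and all $k\ge K$. Then for $\eps_k<\eps_0$ with $k\ge K$ and arbitrary $z$, choosing an admissible $z_{i_\ell}$,
\[
|f_{\eps_k}(z)-f(z)|\le |f_{\eps_k}(z)-f_{\eps_k}(z_{i_\ell})|+|f_{\eps_k}(z_{i_\ell})-f(z_{i_\ell})|+|f(z_{i_\ell})-f(z)|<\eta,
\]
which gives uniform convergence since the bound is independent of $z$.

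The one delicate point, and the main obstacle, is precisely that equicontinuity is only asymptotic: the oscillation estimate holds solely for $\eps<\eps_0(\eta)$, so it cannot be used to establish pointwise precompactness at the net points. The resolution is to keep the two ingredients strictly separate—pointwise precompactness at the countably many points of $D$ rests on uniform boundedness alone (valid for every $\eps$), while the asymptotic equicontinuity is invoked only to control the first term $|f_{\eps_k}(z)-f_{\eps_k}(z_{i_\ell})|$ for small $\eps_k$. This is consistent because the limit is taken exclusively along $\eps_k\to 0$, so the constraint $\eps_k<\eps_0$ is eventually satisfied.
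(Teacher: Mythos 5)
Your proof is correct: keeping the two hypotheses strictly separate—uniform boundedness alone for the diagonal extraction on a countable dense set, and the asymptotic equicontinuity (valid once $\eps_k<\eps_0$) for the limit modulus and the $\eta/3$-net argument—is exactly what the lemma requires, and your epsilon management goes through. The paper omits its own proof, referring to a modification of Lemma 4.2 in \cite{manfredipr12}, and that proof is precisely this standard diagonal-plus-finite-net argument, so your approach coincides with the intended one.
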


For the next lemma, we assume that the domain $\Omega$ satisfies a \emph{uniform exterior sphere condition}. That is, we assume that there is $\delta>0$ such that for any $y\in \partial \Omega$, there is an open ball $B_\delta\subset \R^n\setminus \Omega$ with the radius $\delta$ so that $\overline B_\delta\cap \overline \Omega=\{y\}$. 

\begin{lemma}
The family $\{u_\eps\}_{\eps\in (0,1)}$ of value functions of the game satisfies the assumptions of Lemma \ref{Ascoli}.
\end{lemma}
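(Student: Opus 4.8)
The plan is to verify the two hypotheses of Lemma \ref{Ascoli} separately: uniform boundedness and asymptotic equicontinuity. Uniform boundedness is immediate, since under every strategy the payoff $F(x_\tau,t_\tau)$ lies between $\min_{\Gamma_1}F$ and $\max_{\Gamma_1}F$, and hence so does $u_\eps=\sup_S\E_S[F]$, with bounds independent of $\eps$. The real content is the equicontinuity estimate, and the whole argument rests on one structural observation: whatever strategy the controller uses, each move has conditional mean zero. Indeed, the uniform (tails) step has mean zero, and the controlled (heads) step is $\pm\eps\sigma$ with equal probabilities, hence also mean zero. Thus the spatial process $(x_k)$ is a martingale with increments bounded by $\eps$, \emph{regardless of the strategy}, while the time coordinate decreases deterministically by $\eps^2$ at each step, so $\tau\le t_0/\eps^2+1$ whenever the game starts at time $t_0$. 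The strategy influences only the directional variance of the increments.

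First I would establish a \emph{boundary estimate}: for $(x_0,t_0)\in\Omega_T$ close to the parabolic boundary $\partial_p\Omega_T$, the value $u_\eps(x_0,t_0)$ is close to the corresponding value of $F$, uniformly in $\eps$. For the initial boundary $\Omega\times\{0\}$ this follows from the martingale property together with the deterministic step count: when $t_0$ is small the game exits through the bottom in at most $t_0/\eps^2+1$ steps, and $\E_S|x_\tau-x_0|^2\le\eps^2\,\E_S\tau\lesssim t_0$ forces the exit point to lie within $O(\sqrt{t_0})$ of $x_0$, so that continuity of $F$ finishes the job. For the lateral boundary $\partial\Omega\times[0,T]$ I would use the uniform exterior sphere condition: at $y\in\partial\Omega$ take the exterior ball $B_\delta(z^\ast)$ and build a radial barrier $h$, a suitable concave function of $|x-z^\ast|$, for which the averaging operator in the DPP renders $h(x_k)$ a supermartingale up to an $o(\eps^2)$ error absorbed via Theorem \ref{asymptotic} (the $v_t$ term drops since $h$ is time-independent). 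Optional stopping against $h$ bounds the probability that the token exits far from $y$, uniformly in $\eps$ and in time, thanks to the uniform radius $\delta$.

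With the boundary estimate in hand I would prove the equicontinuity dichotomy for $(x,t),(y,s)$ with $|(x,t)-(y,s)|<r_0$. If both points lie within a fixed distance $\rho$ of $\partial_p\Omega_T$, then both values are close to $F$ at nearby boundary points, and uniform continuity of $F$ makes the difference small. If instead both points are interior, with first $s=t$, I would run the two games under a \emph{synchronous coupling}: the same coin tosses, the same uniform displacement in the tails case, and the same controlled direction $\sigma$ in the heads case. The trajectories then keep the constant spatial offset $v=y-x$ and identical time coordinates. The only discrepancy arises at the first exit time $\tau=\min(\tau_1,\tau_2)$ when one token has crossed $\partial\Omega$ while the other is still inside; on this boundary-layer event the inside token is within $|v|+\eps$ of $\partial\Omega$, so the boundary estimate shows its remaining value is close to $F$ near the exit point, which is in turn close to the other token's payoff by continuity of $F$. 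On the complementary event both tokens exit together through the time boundary at offset $v$, again controlled by continuity of $F$. The residual temporal discrepancy when $s\neq t$ is handled by iterating the DPP over the $(t-s)/\eps^2$ intervening steps and combining the martingale variance bound with the boundary estimate.

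I expect the main obstacle to be the uniform-in-$\eps$ barrier construction at the lateral boundary: one must exhibit a single family of radial barriers, depending only on the exterior-sphere radius $\delta$, that are supersolutions of the discrete mean-value operator after absorbing the $o(\eps^2)$ correction terms, so that optional stopping yields an exit estimate with no $\eps$-dependence. The remaining ingredients---the martingale variance bound, the deterministic step-count control of the time direction, and the synchronous coupling---are comparatively routine once this barrier is available.
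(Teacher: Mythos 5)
Your proposal is correct in outline and coincides with the paper on most of its structure. Uniform boundedness via $|u_\eps|\leq \max_{\Gamma_1}|F|$ is the paper's first observation; your bottom-boundary estimate is exactly the paper's Step 2 (the supermartingale $|x_k-x_0|^2-C_1k\eps^2$ together with the deterministic step count $\tau\leq t_0/\eps^2+1$, then optional stopping and continuity of $F$); and your lateral-boundary barrier is the paper's Step 3, where the explicit function $w(x)=-a|x-z|^2-b|x-z|^{-\xi}+c$ with $\xi=n+p-4$ is precisely a concave radial profile of the kind you describe, satisfying $\mathcal{D}_p w=-2a(n+p-2)<0$ so that the $o(\eps^2)$ Taylor errors are absorbed. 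The genuine divergence is the interior--interior case. You propose a synchronous coupling (same coin tosses, same uniform displacements, translated strategies, constant spatial offset), with the discrepancy analyzed at the first exit via the boundary estimates. The paper instead argues deterministically: it defines translated payoffs $F_1(z,r)=u_\eps(z-x+y,r-t+s)-\eta$ and $F_2=F_1+2\eta$ on an inner strip $I_\eps$, observes that the already-proved boundary estimates give $F_1\leq u_\eps\leq F_2$ there, and concludes by uniqueness of the DPP solution and the comparison principle of Proposition \ref{compprinciple}. Your coupling is the probabilistic shadow of this translation invariance; the paper's route buys simplicity and measurability for free (no coupled measure, no strategy translation, and the time shift $s\neq t$ is absorbed into the translation rather than handled by your somewhat vague ``iterate the DPP over $(t-s)/\eps^2$ steps''), while your coupling makes the mechanism more transparent and would generalize to settings without an exact comparison principle.

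One place your sketch needs repair is the \emph{use} of the lateral barrier. You plan to let optional stopping against $h$ bound the probability that the token exits far from $y$; but the real game can also end by the clock running out, i.e.\ exit through the bottom far from $y$, an event a time-independent spatial barrier does not see, and your parenthetical ``uniformly in $\eps$ and in time'' does not by itself close this. The paper's mechanism is different and handles space and time at once: it introduces an auxiliary \emph{elliptic} game in the annulus $B_R(z)\setminus \overline B_\delta(z)$ (random moves restricted to $B_R(z)$, matching the Neumann condition of $w$ at $\partial B_R(z)$, and using that $w$ is radially increasing to control the clipped averages), shows $w(x_k)+k\eps^2$ is a supermartingale, and deduces the expected-exit-time bound $\mathbb{E}_S[\tau]\leq \eps^{-2}\left(C_2(R/\delta)|x_0-y|+o(1)\right)$ since $\tau\leq \tau^*$ pathwise. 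Feeding this into the Step-2 variance martingale \emph{centered at $y$} then bounds $\mathbb{E}_S[|x_\tau-y|^2]$ and $|\mathbb{E}_S[t_\tau]-t_0|$ simultaneously, whichever part of $\Gamma_\eps$ is hit. You correctly identified the uniform-in-$\eps$ barrier construction as the crux; the extra ingredient you are missing is that the barrier should be converted into an expected-exit-time estimate rather than a direct exit-distribution bound.
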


\begin{proof}
Since $|u_\eps(x,t)|\leq \max_{\Gamma_1} |F|$ for all $(x,t)\in \overline{\Omega}_T$ and $\eps\in (0,1)$, the family $\{u_\eps\}_{\eps\in (0,1)}$ is uniformly bounded.

Fix $\eta>0$. Since the payoff function $F$ is uniformly continuous on $\Gamma_1$, there is $\gamma>0$ so that when $(x,t),(y,s)\in \Gamma_1$ with $|(x,t)-(y,s)|<\gamma$, it holds $|F(x,t)-F(y,s)|<\frac \eta2$. We prove the asymptotic equicontinuity of the family $\{u_\eps\}_{\eps\in (0,1)}$ in four steps. In all steps we have $\eps<\eps_0$ and $|(x,t)-(y,s)|<r_0$. The precise choices of $\eps_0$ and $r_0$ clarify during the proof. We will denote by $C_1,C_2,...$ constants larger than 1 which may depend only on $n,\delta,$ and the diameter of $\Omega$.

\subsection*{Step 1} If $(x,t),(y,s)\in \partial_p \Omega_T$, then $$|u_\eps(x,t)-u_\eps(y,s)|=|F(x,t)-F(y,s)|<\eta $$ when $r_0<\gamma$.

\subsection*{Step 2} Suppose that $(x,t)\in \Omega_T$ and $(y,0)\in \Gamma_\eps$. Let us start the game from $(x_0,t_0)=(x,t)$ with an arbitrary strategy $S$. We obtain 
\begin{align*}
&\mathbb{E}^{(x_0,t_0)}_S[|x_k-x_0|^2\, |\, (t_0,x_0,...,x_{k-1})]\\
& =\frac \alpha2(|(x_{k-1}+\sigma\eps)-x_0|^2+|(x_{k-1}-\sigma\eps)-x_0|^2)+\beta\vint_{B_\eps(x_{k-1})}|y-x_0|^2\,dy\\
& \leq \alpha(|x_{k-1}-x_0|^2+\eps^2)+\beta(|x_{k-1}-x_0|^2+C_1\eps^2)\\
& \leq |x_{k-1}-x_0|^2+C_1\eps^2.
\end{align*}
Hence, $$M_k:=|x_{k}-x_0|^2-C_1k\eps^2 $$ is a supermartingale, and the optimal stopping theorem gives
\begin{align*}
\mathbb{E}^{(x_0,t_0)}_{S}[|x_\tau-x_0|^2]\leq |x_0-x_0|^2+C_1\eps^2\mathbb{E}^{(x_0,t_0)}_{S}[\tau]\leq C_1(r_0+\eps_0^2).
\end{align*}
Here, we used the fact that the stopping time $\tau \leq \frac{t_0}{\eps^2 } + 1$ for a game starting at $t_0$ and in this case $t_0 \leq r_0$.
Since this is true for all strategies, it holds $$\sup_S \mathbb{E}^{(x_0,t_0)}_{S}[|x_\tau-x_0|^2]\leq C_1(r_0+\eps_0^2),$$ which yields 
\begin{align*}
|u_\eps(x_0,t_0)-u_\eps(x_0,0)|=|\sup_S \mathbb{E}^{(x_0,t_0)}_{S}[F(x_\tau,t_\tau)]-F(x_0,0)|<\frac{\eta}{2},
\end{align*}
when $r_0,\eps_0$ are chosen so that $C_1(r_0+\eps_0^2)<\gamma^2$. 

The triangle inequality finishes the argument. Recalling that $(x_0,t_0)=(x,t)$, we have $$|u_\eps(x,t)-u_\eps(y,0)|\leq |u_\eps(x,t)-F(x,0)|+|F(x,0)-F(y,0)|<\eta. $$

\subsection*{Step 3} 

Suppose that $(x,t)\in \Omega_T$ and $(y,s)\in \partial_p \Omega_T$ with $y\in \partial \Omega$. Since the domain $\Omega$ satisfies the uniform exterior sphere condition with $\delta$, there is a ball $B_\delta(z)\subset \R^n\setminus \Omega$ with $\partial B_\delta(z)\cap \overline{\Omega}=\{y\}$. 

We use a barrier argument. In an annulus of $\R^n$, define a function $w$ as
\begin{displaymath}
\left\{ \begin{array}{ll}
w(x)=-a|x-z|^2-b|x-z|^{-\xi}+c & \textrm{in}\ B_{R}(z)\setminus \overline{B}_{\delta}(z),\\
w=0 & \text{on}\ \partial B_{\delta}(z),\\
\frac{\partial w}{\partial \nu}=0 & \text{on}\ \partial B_{R}(z), 
\end{array} \right.
\end{displaymath}
where $\frac{\partial w}{\partial \nu}$ is the normal derivative, and $R$ is chosen so that $\Omega\subset B_R(z)$. The exponent $\xi=n+p-4>0$, since $p>2$ and we may assume that $n\geq 2$ (1-dimensional case is essentially a random walk in an open interval). The positive constants $a,b,c$ are specified below. The function $w$ satisfies $$\Delta w(x)=-2an+b\xi n|x-z|^{-\xi-2}-b\xi(\xi+2)|x-z|^{-\xi-2}, $$ $$\lambda_n(D^2 w(x))=-2a+b\xi|x-z|^{-\xi-2},$$ hence
\begin{equation}\label{poisson}
\mathcal{D}_p w=-2a(n+p-2)\quad \textrm{in}\ B_{R}(z)\setminus \overline{B}_{\delta}(z),
\end{equation}
and it can be extended as a solution to the same equations in $B_{R+\eps}(z)\setminus \overline{B}_{\delta-\eps}(z)$ so that equation \eqref{poisson} holds also near the boundaries. It satisfies an estimate

\[
w(x)\leq C_2(R/\delta)\dist(\partial B_\delta (z),x)+o(1)
\]
for any $x\in B_R(z)\setminus B_\delta (z)$. Here $o(1)\rightarrow 0$ when $\eps\rightarrow 0$.

Let us consider for a moment an elliptic game starting at $x_0=x$ and played by the rules of our game without a time-dependence in the annulus $B_R(z)\setminus \overline B_\delta(z)$, with a special rule that if we are at, say $x_k$, a possible random move is chosen from $B_\eps(x_k)\cap B_R(z)$ according to the uniform probability density, and also the controller cannot exit $B_R(z)$. The game ends when the token enters the ball $\overline{B}_\delta(z)$. Because of the random moves, the game ends almost surely in a finite time. Define a stopping time for this game as $\tau^*$,
\[
\tau^*=\inf\{k\ :\ x_k\in \overline{B}_\delta (z)\}.
\] 
Let $S$ be an arbitrary strategy for the controller. The Taylor expansion for $w$ gives 
\begin{align*}
&\frac12 (w(x_{k-1}+\eps\sigma)+w(x_{k-1}-\eps\sigma))\\
& \quad =w(x_{k-1})+\frac12\eps^2\langle D^2w(x_{k-1})\sigma,\sigma\rangle+o(\eps^2)\\
& \quad \leq w(x_{k-1})+\frac12\eps^2\lambda_n(D^2w(x_{k-1}))+o(\eps^2),
\end{align*}
since the first order terms vanish, $$\langle D w(x_{k-1}),\eps\sigma \rangle+\langle D w(x_{k-1}),-\eps\sigma \rangle=0. $$ Moreover, since $w$ is radially increasing, it holds 
\[
\vint_{B_\eps(x_{k-1})\cap B_R(z)}w(y)\,\text{d}y\leq w(x_{k-1})+ \frac{\eps^2}{2(n+2)}\Delta w(x_{k-1})+o(\eps^2).
\]
By choosing the constant $a$ properly, $$M_k:=w(x_k)+k\eps^2$$ is a supermartingale. Indeed, we have
\begin{align*}
\mathbb{E}_S^{x_0}[M_k\, |\, x_0,...,x_{k-1}]&=\frac{\alpha}{2}(w(x_{k-1}+\eps\sigma)+w(x_{k-1}-\eps\sigma))\\
& \quad+\beta\vint_{B_\eps(x_{k-1})\cap B_R(z)}w(y)\,dy+k\eps^2\\
& \leq w(x_{k-1})+\frac{\eps^2}{2(p+n)}\mathcal{D}_p w(x_{k-1})+k\eps^2+o(\eps^2)\\
& =w(x_{k-1})-\frac{n+p-2}{n+p}a\eps^2+k\eps^2+o(\eps^2)\\
& \leq w(x_{k-1})+(k-1)\eps^2,
\end{align*}
by choosing for example $a=2\frac{n+p}{n+p-2}$ and assuming that $o(\eps^2)<\eps^2$. The choice of $a$ determines the other constants $b$ and $c$: The Neumann and Dirichlet boundary conditions of the barrier function $w$ are satisfied by choosing $b=(2a/\xi)R^{\xi+2}$ and $c=a\delta^2+b\delta^{-\xi}$.

By the optimal stopping theorem, we have $$\mathbb{E}_S^{x_0}[w(x_{\tau*})+\tau^{*}\eps^2]\leq w(x_0), $$ that is, 
\begin{align*}
\mathbb{E}_S^{x_0}[\tau^*]\leq \frac{w(x_0)}{\eps^2}\leq \frac{C_2 (R/\delta)\dist(\partial B_\delta (z),x_0)+o(1)}{\eps^2},
\end{align*}
where we used $|\mathbb{E}_S^{x_0}[w(x_{\tau*})]|\leq o(1)$.

Now we come back to our game, starting at $(x_0,t_0)=(x,t)$, again with an arbitrary strategy $S$. Since it holds $|x_0-y|\geq \dist(\partial B_\delta (z),x_0)$, for the stopping time of our game we now have an estimate 
\begin{align*}
\mathbb{E}_S^{(x_0,t_0)}[\tau]&\leq \mathbb{E}_S^{(x_0,t_0)}[\tau^*]\\
& \leq \frac{C_2(R/\delta)\dist(\partial B_\delta (z),x_0)+o(1)}{\eps^2}\\
& \leq \frac{C_2(R/\delta)|x_0-y|+o(1)}{\eps^2}.
\end{align*}
By using the same martingale argument as in Step 2 but replacing $x_0$ by $y$, we have
\begin{align*}
\mathbb{E}^{(x_0,t_0)}_{S}[|x_\tau-y|^2]&\leq |x_0-y|^2+C_1\eps^2\mathbb{E}^{(x_0,t_0)}_{S}[\tau]\\
& \leq  |x_0-y|^2+C_1\eps^2 \frac{C_2(R/\delta)|x_0-y|+o(1)}{\eps^2}\\
& \leq |x_0-y|^2+C_3(|x_0-y|+o(1))\\
& <r_0^2+C_3(r_0+o(1))<\left(\frac{\gamma}{2}\right)^2,
\end{align*}
when $\eps_0,r_0$ are chosen so that $C_3(r_0+o(1))<\left(\frac{\gamma}{4}\right)^2$ and $r_0^2<\left(\frac{\gamma}{4}\right)^2$. This also gives 
\[
|\mathbb{E}^{(x_0,t_0)}_{S}[t_\tau]-t_0|<\left(\frac{\gamma}{4}\right)^2.
\]
Hence, we have
\begin{align*}
|u_\eps(x_0,t_0)-u_\eps(y,t_0)|=|\sup_S \mathbb{E}^{(x_0,t_0)}_{S}[F(x_\tau,t_\tau)]-F(y,t_0)|<\frac{\eta}{2},
\end{align*}
and recalling that $(x_0,t_0)=(x,t)$ the triangle inequality gives 
\[
|u_\eps(x,t)-u_\eps(y,s)|\leq |u_\eps(x,t)-F(y,t)|+|F(y,t)-F(y,s)|<\eta.
\]

\subsection*{Step 4} 
Finally, suppose that $(x,t),(y,s)\in \Omega_T$. This is an argument based on translation invariance and comparison principle. Let $r_0,\eps_0$ satisfy the conditions of the previous steps. Define an inner $\eps$-strip $I_\eps$ by 
\[
I_\eps:=\{(z,r)\in \overline{\Omega}_T\ :\ \dist((z,r),\partial_p \Omega_T)\leq r_0\}.
\]
If $(x,t)\in I_\eps$, there is a point $(x',t')\in \partial_p \Omega_T$ such that $|(x,t)-(x',t')|\leq r_0$. Then from the conclusions of the previous steps we obtain
\[
|u_\eps(x,t)-u_\eps(y,s)|\leq |u_\eps(x,t)-F(x',t')|+|F(x',t')-u_\eps(y,s)|<\eta.
\]
The argument is identical if $(y,s)\in I_\eps$, so it remains to study the case $(x,t),(y,s)\in \Omega_T\setminus I_\eps$. We may assume that $t\leq s$. Define functions $F_1,F_2$ on the strip $I_\eps$ as follows,
\[
F_1(z,r)=u_\eps(z-x+y,r-t+s)-\eta,\quad F_2(z,r)=u_\eps(z-x+y,r-t+s)+\eta.
\]
Then 
\[
F_1(z,r)\leq u_\eps(z,r)\leq F_2(z,r)
\]
for all $(z,r)\in I_\eps$. Let $u_\eps^1$ be the value function of the game in $\Omega_T\setminus I_\eps$ with the payoff $F_1$ on $I_\eps$, and $u_\eps^2$ the value function of the game in $\Omega_T\setminus I_\eps$ with the payoff $F_2$ on $I_\eps$. By the uniquess of the value function, we have for all $(z,r)\in \Omega_T\setminus I_\eps$
\begin{align*}
u_\eps^1(z,r)&=u_\eps(z-x+y,r-t+s)-\eta,\\
u_\eps^2(z,r)&=u_\eps(z-x+y,r-t+s)+\eta.
\end{align*}
By the comparison principle, see Proposition \ref{compprinciple}, we have 
\begin{align*}
u_\eps(x,t)&\geq u_\eps^1(x,t)=u_\eps(y,s)-\eta,\\
u_\eps(x,t)&\leq u_\eps^2(x,t)=u_\eps(y,s)+\eta.\qedhere
\end{align*}

\end{proof}

From the previous lemmas it follows that if $(u_{\eps_j})$ is a sequence of value functions with $\eps_j\rightarrow 0$ and $(u_{\eps_{j_k}})$ is an arbitrary subsequence, then this subsequence has a subsequence converging uniformly to $v$. Hence, the sequence $(u_{\eps_j})$ converges to $v$ uniformly, and we write $u_\eps\rightarrow v$ to simplify the notation. It remains to show that the function $v$ is the solution of \eqref{Dirichlet}.

\begin{theorem}\label{convergence}
The uniform limit $v= \lim_{\eps \rightarrow 0} u_\eps$ is the unique viscosity solution of \eqref{Dirichlet}. 
\end{theorem}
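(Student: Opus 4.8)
The plan is to combine the exact dynamic programming principle satisfied by the value functions $u_\eps$ with the asymptotic mean value formula of Theorem \ref{asymptotic} for smooth test functions. From the preceding lemmas we already know that $u_\eps\to v$ uniformly on $\overline{\Omega}_T$ and that $v$ is continuous, so two things remain to be checked: that $v$ attains the boundary data, $v=F$ on $\partial_p\Omega_T$, and that $v$ solves $2(n+p)v_t=\mathcal{D}_p v$ in $\Omega_T$ in the viscosity sense. For the boundary condition I would reuse the estimates from the proof of the previous lemma: Steps 2 and 3 show that for $(x,t)\in\Omega_T$ close to a boundary point $(x',t')\in\partial_p\Omega_T$ one has $|u_\eps(x,t)-F(x',t')|<\eta$ uniformly in small $\eps$. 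Letting $\eps\to0$ and then sending the interior point to the boundary, the uniform convergence and continuity of $v$ force $v(x',t')=F(x',t')$.

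For the interior equation I would show that $v$ is simultaneously a viscosity super- and subsolution. To see it is a supersolution, let $\phi\in C^2$ touch $v$ from below at an interior point $(x_0,t_0)$, strictly away from $(x_0,t_0)$. Since $u_\eps$ is lower semicontinuous, $u_\eps-\phi$ attains its minimum over a small compact space-time cylinder around $(x_0,t_0)$ at some point $(x_\eps,t_\eps)$, and uniform convergence together with the strict touching forces $(x_\eps,t_\eps)\to(x_0,t_0)$. Writing $c_\eps=(u_\eps-\phi)(x_\eps,t_\eps)$ we have $u_\eps\ge\phi+c_\eps$ throughout the cylinder; feeding this into the DPP \eqref{dpp} satisfied by $u_\eps$ at $(x_\eps,t_\eps)$ and using that the two weights sum to $1$ so the constants $c_\eps$ cancel, I obtain
\[
\phi(x_\eps,t_\eps)\ge \frac{n+2}{p+n}\vint_{B_\eps(x_\eps)}\phi(y,t_\eps-\eps^2)\,dy+\frac{p-2}{p+n}\sup_{|\sigma|=1}\frac{\phi(x_\eps+\eps\sigma,t_\eps-\eps^2)+\phi(x_\eps-\eps\sigma,t_\eps-\eps^2)}{2}.
\]
Applying the asymptotic mean value formula of Theorem \ref{asymptotic} to the smooth function $\phi$ at $(x_\eps,t_\eps)$, cancelling $\phi(x_\eps,t_\eps)$, dividing by $\eps^2$ and letting $\eps\to0$ yields $2(n+p)\phi_t(x_0,t_0)\ge\mathcal{D}_p\phi(x_0,t_0)$, the required supersolution inequality.

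The subsolution direction is analogous, but is where the main technical obstacle lies: because $u_\eps$ is only lower semicontinuous, $u_\eps-\phi$ need not attain a maximum. I would therefore replace the exact maximizer by an approximate one, choosing $(x_\eps,t_\eps)$ with $(u_\eps-\phi)(x_\eps,t_\eps)\ge\sup(u_\eps-\phi)-\eps^3$; the extra error is $o(\eps^2)$ and is harmless after dividing by $\eps^2$, while uniform convergence and strict touching from above still force $(x_\eps,t_\eps)\to(x_0,t_0)$. The same DPP-plus-asymptotic-formula computation, now with reversed inequalities and the supremum bounded above by the corresponding supremum for $\phi$, gives $2(n+p)\phi_t(x_0,t_0)\le\mathcal{D}_p\phi(x_0,t_0)$. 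Thus $v$ is a viscosity solution of \eqref{Dirichlet}. Finally, the uniqueness of viscosity solutions from \cite{CIL92} identifies $v$ as the unique solution and, as already noted before the theorem, upgrades the subsequential convergence to convergence of the whole family $u_\eps\to v$. I expect the handling of the supremum term together with the lower semicontinuity of $u_\eps$ in the subsolution step to be the most delicate point; everything else is routine Taylor-expansion bookkeeping already packaged into Theorem \ref{asymptotic}.
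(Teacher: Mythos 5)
Your proposal is correct and takes essentially the same route as the paper: combine the DPP with the asymptotic mean value formula of Theorem \ref{asymptotic} for the test function at $o(\eps^2)$-approximate extremum points of $u_\eps-\phi$ (the constants cancelling since $\alpha+\beta=1$), divide by $\eps^2$, let $\eps\to 0$, and invoke uniqueness from \cite{CIL92}. The paper writes out the subsolution case with $\delta_\eps$-approximate maximizers, $\delta_\eps=o(\eps^2)$, exactly as you do and declares the supersolution case analogous; your refinements --- exact minimizers via lower semicontinuity of $u_\eps$, and the explicit check that $v=F$ on $\partial_p\Omega_T$ via Steps 2--3 of the equicontinuity lemma --- are sound but cosmetic.
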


\begin{proof}
By uniqueness of viscosity solutions (see \cite{CIL92}), it is sufficient to show that $v$ is a viscosity solution of \eqref{Dirichlet}. To this end, let $\phi \in C^2$ touch $v$ from above at $(x_0,t_0)\in \Omega_T$, 
\begin{align*}
0= (v-\phi)(x_0,t_0) > (v-\phi)(x,t)
\end{align*}
for all $(x,t)$ close to $(x_0,t_0)$. From the definition of supremum, given $\delta_\eps>0$, there are points $(x_\eps, t_\eps)$ close to $(x_0,t_0)$ such that 
\begin{align*}
u_\eps (x_\eps, t_\eps) - \phi(x_\eps, t_\eps) \geq u_\eps (y,s) - \phi(y,s) - \delta_\eps
\end{align*}
for all $(y,s)$ in a neighborhood of $(x_\eps, t_\eps)$. Using the fact that $u_\eps \rightarrow v$ uniformly and $v-\phi$ is a continuous function with a maximum point at $(x_0,t_0)$, we see that $(x_\eps, t_\eps) \rightarrow (x_0,t_0)$ as $\eps \rightarrow 0$.

Since $\phi \in C^{2} (\Omega_T)$, Theorem \ref{asymptotic} gives
\begin{align*}
&\beta \vint_{B_\eps(x_\eps)}\phi(y,t_\eps-\eps^2 )\,dy\\
&\quad +\alpha\sup_{|\sigma|=1}\left[\frac{\phi(x_\eps+\eps\sigma,t_\eps-\eps^2 )+\phi(x_\eps-\eps\sigma,t_\eps-\eps^2 )}{2}\right]\\
& =\phi(x_\eps,t_\eps)+\frac{\eps^2}{2(n+p)}  (\mathcal{D}_p \phi(x_\eps,t_\eps)-2(n+p)\phi_t(x_\eps,t_\eps))+o(\eps^2).
\end{align*}
We can now estimate
\begin{align*}
&\beta \vint_{B_\eps(x_\eps)}u_\eps(y,t_\eps-\eps^2 )\,dy\\
&\quad +\alpha\sup_{|\sigma|=1}\left[\frac{u_\eps(x_\eps+\eps\sigma,t_\eps-\eps^2 )+u_\eps(x_\eps-\eps\sigma,t_\eps-\eps^2 )}{2}\right]\\ 
& \leq u_\eps(x_\eps, t_\eps) - \phi (x_\eps, t_\eps) + \delta_\eps  +\beta \vint_{B_\eps(x)} \phi  (y,t_\eps-\eps^2 )  \,dy\\
&\quad +\alpha\sup_{|\sigma|=1}\left[\frac{\phi(x_\eps+\eps\sigma,t_\eps-\eps^2)+\phi(x_\eps-\eps\sigma,t_\eps-\eps^2 ) }{2}\right]\\ 
& =u_\eps(x_\eps,t_\eps) + \delta_\eps  + \frac{\eps^2}{2(n+p)} (\mathcal{D}_p \phi(x_\eps,t_\eps)-2(n+p)\phi_t(x_\eps,t_\eps))    + o(\eps^2).
\end{align*}
As the function $u_\eps$ satisfies the DPP, we are left with
\begin{align*}
0 < \delta_\eps  + \frac{\eps^2}{2(n+p)}  (\mathcal{D}_p \phi(x_\eps,t_\eps)-2(n+p)\phi_t(x_\eps,t_\eps))    + o(\eps^2).
\end{align*}
Choose now $\delta_\eps =o(\eps^2)$. Dividing by $\eps^2 $ and letting $\eps \rightarrow 0$ gives $$2(n+p)\phi_t (x_0,t_0) \leq \mathcal D_{p} \phi (x_0,t_0),$$ which shows that $v$ is a viscosity subsolution. To show that $v$ is a viscosity supersolution is analogous. 
\end{proof}

\end{document}